\documentclass[11pt,a4paper]{article}
\title{\bf Weak $(1,1)$ Boundedness of Riesz Transforms on Vector Bundles}
\author{Huaiqian Li\footnote{Email: {\color{blue}huaiqianlee@gmail.com}. Supported in part by the National Natural Science Foundation of China (Grant No. 11831014). }\vspace{3mm}\\
{\footnotesize Center for Applied Mathematics, Tianjin University,
 Tianjin 300072, P. R. China}
}
\date{}
\usepackage{amssymb,amsmath,amsfonts,amsthm,color,mathrsfs}

\setlength{\hoffset}{-0.4mm} \setlength{\voffset}{-0.4mm}
\setlength{\textwidth}{425pt} \setlength{\textheight}{678pt}
\setlength{\topmargin}{0pt} \setlength{\oddsidemargin}{14pt}
\setlength{\evensidemargin}{14pt} \setlength\arraycolsep{1pt}
\setlength{\headsep}{0mm} \setlength{\headheight}{0mm}

\def\R{\mathbb{R}}

\def\D{\mathbb{D}}

\def\d{\textup{d}}
\def\D{\textup{D}}

\def\cut{\textup{cut}}

\def\End{\textup{End}}
\def\Hess{\textup{Hess}}
\def\Hom{\textup{Hom}}

\def\Ric{\textup{Ric}}
\def\tr{\textup{tr}}

\def\vol{\textup{vol}}
\def\<{\langle}
\def\>{\rangle}
\def\Proof.{\noindent{\bf Proof. }}

\def\newdot{{\kern.8pt\cdot\kern.8pt}}

\newtheorem{theorem}{Theorem}[section]
\newtheorem{lemma}[theorem]{Lemma}

\theoremstyle{definition}

\begin{document}
\allowdisplaybreaks
\maketitle
\makeatletter % '@' is now a normal "letter" for TeX
\renewcommand\theequation{\thesection.\arabic{equation}}
\@addtoreset{equation}{section}
\makeatother % '@' is restored as a "non-letter" character for TeX

\begin{abstract}
The weak $(1,1)$ boundedness of (local) Riesz transforms corresponding to a large class of Schr\"{o}dinger operators on vector bundles is proved, mainly assuming the generalized volume doubling condition, either Gaussian or sub-Gaussian upper bounds for the heat kernel only in short time, and derivative estimates of Bismut type for the corresponding semigroup.
\end{abstract}

{\bf MSC 2010:} primary 58J35, 53C21; secondary 42B20, 58J65

{\bf Keywords:} heat kernel; Riesz transform; vector bundle
\section{Introduction}\hskip\parindent
Let $M$ be be a complete and non-compact Riemannian manifold, $\vol$ be the Riemannian volume measure, and $\Delta$ be the Laplace--Beltrami operator. Let $(q_t)_{t>0}$ be the heat kernel corresponding to $\Delta$ and $B(x,r)$ be the open ball in $M$ with center $x$ and radius $r>0$. Denote $\vol(x,r)=\vol\big(B(x,r)\big)$. The main theme of the Riesz transform on Riemannian manifolds, denoted by $\nabla(-\Delta)^{-1/2}$, is on the weak $(1,1)$ boundedness, i.e.,
$$\vol\{x\in M:\, |\nabla(-\Delta)^{-1/2} f(x)|\geq\sigma\}\lesssim \frac{1}{\sigma}\int_M|f|\,\d\vol,\quad f\in C_c^\infty(M),\,\sigma>0,$$
and the $L^p$ boundedness, i.e., for some or all $p\in (1,\infty)$,
$$\|\nabla(-\Delta)^{-1/2} f\|_{L^p(M,\vol)}\lesssim \|f\|_{L^p(M,\vol)},\quad f\in C_c^\infty(M).$$
For instance, R. Strichartz asked in \cite{Strichartz1983} that on what non-compact Riemannian manifolds and for which $p\in (1,\infty)$, the Riesz transform $\nabla(-\Delta)^{-1/2}$ is $L^p$ bounded.

In \cite[Theorem 1.1]{CoulhonDuong1999}, under the volume doubling condition, i.e., there exists a constant $C>0$ such that
\begin{eqnarray}\label{doubling}
\vol(x,2r)\leq C \vol(x,r),\quad  x\in M,\, r>0,
\end{eqnarray}
 and the Gaussian upper bound for the heat kernel, i.e., there exist constants $C_1,C_2>0$ such that
\begin{eqnarray}\label{gaussian}
q_t(x,y)\leq\frac{C_1}{\vol(x,\sqrt{t})}\exp\Big\{-C_2\frac{d^2(x,y)}{t}\Big\},\quad x,y\in M,\, t>0,
\end{eqnarray}
the Riesz transform was proved to be weak $(1,1)$ bounded (and hence $L^p$ bounded for all $p\in (1,2]$ by interpolation since the $L^2$ boundedness holds obviously). Let $m>2$. Recently, under the volume doubling condition \eqref{doubling} and the sub-Gaussian upper bound for the heat kernel, i.e., there exist constants $C_1,C_2>0$ such that, for any $x,y\in M$,
 \begin{equation}\label{sub-Gaussian}
q_t(x,y)\leq
 \begin{cases}
 \frac{C_3}{\vol(x,\sqrt{t})}\exp\Big\{-C_4\frac{d^2(x,y)}{t}\Big\},\quad&{ t\in(0,1)},\cr
 \frac{C_3}{\vol(x,t^{1/m})}\exp\Big\{-C_4\Big(\frac{d^m(x,y)}{t}\Big)^{\frac{1}{m-1}}\Big\},\quad&{ t\geq1},
 \end{cases}
 \end{equation}
 the Riesz transform was also proved to be weak $(1,1)$ bounded; see \cite[Theorem 1.2]{CCFR2017} (which also includes results on graphs). Furthermore, on a large class of Riemannian manifolds, under \eqref{doubling} and generalized upper bounds on the heat kernel $q_t$ and on its gradient, the Riesz transform was proved to be weak $(1,1)$ bounded in \cite{LiHQ2018}, where a typical example is given as the direct product Riemannian manifolds such that each element satisfies \eqref{doubling} and the Gaussian \eqref{gaussian}  or the sub-Gaussian \eqref{sub-Gaussian} upper bound for the heat kernel. On proofs of the aforementioned results, estimates on the gradient of the heat kernel $q_t$ play a crucial role. However, similar gradient estimates seem not easy to get for heat kernels on vector bundles since they are just linear operators between fibers. Due to this gap, derivative estimates for semigroups on vector bundles was established, and then applied to prove the weak $(1,1)$ boundedness of the (local) Riesz transform corresponding to a large class of Schr\"{o}dinger operators on vector bundles under the assumption of Gaussian upper bound \eqref{gaussian} for the heat kernel $q_t$ and some kind of volume doubling condition; see \cite[THEOREMS 2.1 and 4.1]{TW2004}.

 We should mention that, assuming non-negative Ricci curvature, weak $(1,1)$ boundedness of Riesz transforms on Riemannian manifolds is proved in \cite{LiJ1991,Chen1992} via gradient heat kernel estimates, while instead of weak $(1,1)$ boundedness, there are many other works on the $L^p$ boundedness (for $1<p<\infty$) of Riesz transforms for differential forms on Riemannian manifolds by different approaches mainly assuming that the Ricci curvature on forms is uniformly bounded from below; among many other works, see e.g. \cite{Ba1987,LiX2009,LiX2010,LiX2013,LiX2014,CMO2015,van2018}, as well as the recent one \cite{BBC2020} (in Section 3 of which the vector bundle case is also studied by a probabilistic approach).

Motivated by the papers \cite{TW2004,CCFR2017,LiHQ2018}, in this work, we consider the  weak $(1,1)$ boundedness of the (local) Riesz transform corresponding to a large class of Schr\"{o}dinger operators on vector bundles. In Section 2, we introduce the framework and recall basic results. In Section 3, we present the main result (Theorem \ref{main-thm}) and its proof.

\section{Preliminaries}\hskip\parindent
Let $E\rightarrow M$ and $F\rightarrow M$ be vector bundles over the same  (not necessarily complete) Riemannian manifold $M$, equipped with metric connections  $\nabla^E$ and $\nabla^F$ respectively. Denote $TM$ and $T^*M$ the tangent and the cotangent bundle of $M$, respectively. We use $\Gamma_{C^\infty}(\bullet)$ (resp. $\Gamma_{C^\infty_b}(\bullet)$ and  $\Gamma_{C^\infty_c}(\bullet)$) to denote the class of smooth (resp.  bounded smooth and  compactly supported smooth) sections of a vector bundle ``$\bullet$''.

Let $\omega\in\Gamma_{C^\infty}(\Hom(T^*M\otimes E, F))$ be a multiplication map, where $\Hom(\bullet,\bullet)$ is the Hom-bundle. Introduce the Dirac type operator from $\Gamma_{C^\infty}(E)$ to $\Gamma_{C^\infty}(F)$ defined by
$$\D_\omega:=\omega\nabla^E,$$
which is a first order differential operator and can be regarded as the composition:
$$\Gamma_{C^\infty}(E)\xrightarrow{\nabla^E}\Gamma_{C^\infty}(T^*M\otimes E)\xrightarrow{\omega}\Gamma_{C^\infty}(F).$$
The Bochner Laplacian $(\nabla^E)^*\nabla^E: \Gamma_{C^\infty}(E)\rightarrow \Gamma_{C^\infty}(E)$ is the second order elliptic differential operator given by the composition:
$$\Gamma_{C^\infty}(E)\xrightarrow{\nabla^E}\Gamma_{C^\infty}(TM\otimes E)\xrightarrow{\nabla^{TM}\otimes1+1\otimes\nabla^E}\Gamma_{C^\infty}(TM\otimes TM\otimes E)\xrightarrow{\tr}\Gamma_{C^\infty}(E),$$
where $\tr$ is the trace operator with respect to the Riemannian metric of $M$ and $\nabla^{TM}$ is the Riemannian connection on $TM$, and the same as the Bochner Laplacian $(\nabla^F)^*\nabla^F: \Gamma_{C^\infty}(F)\rightarrow \Gamma_{C^\infty}(F)$. 

Let $V\in C^2(M)$, $\mathcal{U}_E\in \Gamma_{C^\infty}(\End(E))$ and $\mathcal{U}_F\in\Gamma_{C^\infty}(\End(F))$.  Consider Schr\"{o}dinger type operators
$$L=-(\nabla^E)^*\nabla^E + \nabla^E_{\nabla V}-\mathcal{U}_E$$
on $\Gamma_{C^\infty}(E)$, and
$$T=-(\nabla^F)^*\nabla^F + \nabla^F_{\nabla V}-\mathcal{U}_F$$
on $\Gamma_{C^\infty}(F)$. Note in passing that if both $E$ and $F$ are the same trivial bundle $M\times\R$, then both $L$ and $T$ are just  Schr\"{o}dinger operators on $M$ of the type $\Delta +\nabla V+ U$, where $U:M\rightarrow\R$ is a real potential.

As in \cite{TW2004}, we make the standing assumption that
$$\vartheta=T\D_\omega-\D_\omega L$$
is of zeroth order, i.e., $\vartheta\in\Gamma_{C^\infty}(\Hom(E,F))$, and $\omega$ is compatible with the Riemannian connection, which means that for any $Z\in\Gamma_{C^\infty}(TM)$, $\alpha\in \Gamma_{C^\infty}(E)$ and $v\in TM$, it holds that
$$\nabla^F_v \big(\omega(Z^\flat\otimes\alpha)\big)=\omega\big((\nabla^{TM}_v Z\big)^\flat\otimes\alpha)+\omega(Z^\flat\otimes\nabla^E_v\alpha),$$
where $\flat: TM\rightarrow T^*M$ is the music isomorphism.

Set $\d\mu=e^V\d\vol$, where $\vol$ is the Riemannian volume measure on $M$. Let $(\bullet,\bullet)_E$ (resp. $(\bullet,\bullet)_F$) denote the scalar product and $|\bullet|_E$ (resp. $|\bullet|_F$) the induced norm on fibers of $E$ (resp. $F$).  Let $p\in [1,\infty]$.  Denote  $\Gamma_{L^p_\mu}(E)$ the real Banach space of measurable sections $\alpha: M\rightarrow E$ such that $\|\alpha\|_p<\infty$, where
\begin{eqnarray*}
\|\alpha\|_p:=
\begin{cases}
\big(\int_M|\alpha(x)|_E^p\,\d\mu(x)\big)^{1/p},\quad&{p\in[1,\infty)},\\
\inf\{c\geq0:\, |\alpha|_E\leq c\, \mu\mbox{-a.e.}\},\quad&{p=\infty}.
\end{cases}
\end{eqnarray*}
It turns out that for every $p\in[1,\infty)$, $\Gamma_{L^p_\mu}(E)$ is the closure of $\Gamma_{C_c^\infty}(E)$ with respect to the norm $\|\bullet\|_p$.

We further assume that $\mathcal{U}_E$ is symmetric, i.e., for every $x\in M$,  $\mathcal{U}_E(x)$ is a symmetric linear operator from fiber $E_x$ to itself. It is well known that if $\mathcal{U}_E$ is lower bounded, then $(L,\Gamma_{C^\infty_c}(E))$ is upper bounded in $\Gamma_{L^2_\mu}(E)$ and hence it has a canonical self-adjoint extension, namely, the Friedrich extension in $\Gamma_{L^2_\mu}(E)$, still denoted by $L$. Let $(P_t)_{t\geq0}$ be the semigroup corresponding to $L/2$.

Denote $(P^0_t)_{t\geq0}$ the semigroup corresponding to the Friedrich extension of $((\Delta+\nabla V)/2, C_c^\infty(M))$ in $L^2_\mu(M)$.

Now we recall a derivative estimate of $P_t$, which was established via the generalized Bismut formula under some further assumptions (see \cite[THEOREM 2.1]{TW2004}). Let $\|\cdot\|$ denote the operator norm.

\medskip

\noindent\textbf{Hypothesis} (I). There exist some constants $a_1,a_2,a_3\in\R$ and $C(\vartheta), C(\omega)\geq0$ such that
\begin{itemize}
\item[(I.1)] $a_1|\alpha|_E^2\leq (\mathcal{U}_E\alpha, \alpha)_E\leq a_2|\alpha|_E^2$, for every $\alpha\in E$,
\item[(I.2)] $(\mathcal{U}_F\beta,\beta)_F\geq a_3|\beta|_F^2$, for every $\beta\in F$,
\item[(I.3)] $\|\vartheta\|\leq C(\vartheta)$, $\|\omega\|\leq C(\omega)$.
\end{itemize}

Let $d$ be the Riemannian distance on $M$. For $x\in M$, let $d_x:=d(x,\bullet)$ and denote $\cut(x)$  the cut locus of $x$ in $M$.
\begin{theorem}\label{deri-estimate}
Let $M$ be a complete Riemannian manifold and assume that for every $x\in M$, there exist constants $c>0$ and $\delta\in (0,1)$ such that
$$(\Delta+\nabla V)d_x\leq c(d_x^{-1}+d_x^\delta)$$
outside $\{x\}\cup \cut(x)$. Suppose that \textbf{Hypothesis} (I) holds. Then, for every $\beta\in \Gamma_{C^\infty_b}(E)$ and $x\in M$,
$$|\D_\omega P_t\beta|^2(x)\leq e^{-a_1t}\|\beta\|_\infty\frac{a[C(\omega)+C(\vartheta)\sqrt{t}/2]^2}{1-e^{-at}}P_t^0|\beta|(x),$$
for all $t>0$, where $a:=\max\{a_2-a_3,0\}$ and $a/(1-e^{-at}):=1/t$ if $a=0$.
\end{theorem}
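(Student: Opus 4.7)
The plan is to combine a Feynman-Kac stochastic representation for the twisted semigroups $P_t$ and $Q_t$ (corresponding to $L/2$ and $T/2$) with a Bismut-Elworthy-Li type integration by parts to differentiate through $\D_\omega$. The key algebraic inputs are the intertwining $T\D_\omega-\D_\omega L=\vartheta$, with $\vartheta$ of order zero by the standing assumption, together with the compatibility of $\omega$ with $\nabla^{TM}$; the key geometric input is the generalized Laplacian comparison on $d_x$, which ensures non-explosion of the underlying diffusion and justifies global stochastic calculus after localization away from $\cut(x)$.

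First I would fix $x\in M$ and introduce the $\mu$-Brownian motion $X_s$ with generator $\frac{1}{2}(\Delta+\nabla V)$ starting at $x$, together with the stochastic parallel transports $\Pi^E_{0,s}\colon E_{X_s}\to E_x$ and $\Pi^F_{0,s}\colon F_{X_s}\to F_x$. The Feynman-Kac formula then yields
$$P_t\beta(x)=\E\bigl[R^E_t\,\Pi^E_{0,t}\beta(X_t)\bigr],\qquad Q_t\eta(x)=\E\bigl[R^F_t\,\Pi^F_{0,t}\eta(X_t)\bigr],$$
where $R^E_s,R^F_s$ are the $\End$-valued multiplicative functionals driven by $\frac{1}{2}\mathcal{U}_E,\frac{1}{2}\mathcal{U}_F$ along the path, and Hypothesis (I.1)--(I.2) give $\|R^E_s\|^2\le e^{-a_1s}$ and $\|R^F_s\|^2\le e^{-a_3s}$.

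Next, given a deterministic bridge $h\in C^1([0,t])$ with $h(0)=1$, $h(t)=0$, an It\^o computation exploiting the intertwining and the compatibility of $\omega$ produces a local martingale whose terminal value at $s=t$ can be rearranged, via integration by parts of its $h'$-stochastic-integral against $\beta(X_t)$, into a Bismut identity $\D_\omega P_t\beta(x)=\E[\Theta_t\,\beta(X_t)]$ for a random weight $\Theta_t\in\Hom(E_x,F_x)$. A pathwise bound using Hypothesis (I.3), combined with the decay bounds on $R^E,R^F$, yields
$$\|\Theta_t\|^2\leq e^{-a_1 t}\bigl[C(\omega)+C(\vartheta)\sqrt{t}/2\bigr]^2\int_0^t e^{au}|h'(u)|^2\,du,\qquad a:=\max\{a_2-a_3,0\},$$
where the $C(\vartheta)\sqrt{t}/2$ arises from Cauchy-Schwarz on the time integral of the $\vartheta$-correction, and the weight $e^{au}$ reflects the gap between the $E$- and $F$-side growth. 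Minimizing $\int_0^t e^{au}|h'(u)|^2\,du$ subject to $h(0)=1$, $h(t)=0$ produces the value $\frac{a}{1-e^{-at}}$, attained at $h(s)=\frac{e^{-as}-e^{-at}}{1-e^{-at}}$ and read as $1/t$ when $a=0$. Finally, the pointwise Cauchy-Schwarz
$$|\E[\Theta_t\beta(X_t)]|^2\leq \E[\|\Theta_t\|^2\,|\beta(X_t)|_E]\,\E[|\beta(X_t)|_E]\leq \|\beta\|_\infty\,\E\|\Theta_t\|^2\,P_t^0|\beta|(x)$$
produces the stated estimate.

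\textbf{Main obstacle.} The technically subtle part is making the Bismut identity valid globally on $M$: $d_x$ is non-smooth on $\cut(x)$, and the derivative formula must be obtained by a localization-and-limit procedure that uses precisely the hypothesis $(\Delta+\nabla V)d_x\leq c(d_x^{-1}+d_x^\delta)$ to control the exceptional set in a quantitative way so that the relevant stochastic integrals remain genuine martingales. A secondary but essential step is the variational choice of $h$ that produces simultaneously the optimal factor $\frac{a}{1-e^{-at}}$ and the splitting $C(\omega)+C(\vartheta)\sqrt{t}/2$ inside the square.
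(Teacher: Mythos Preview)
The paper does not contain a proof of this theorem at all: it is quoted as a known result from Thalmaier--Wang \cite[THEOREM 2.1]{TW2004} (see the sentence introducing the theorem in Section~2), and is then used as a black box in the proof of Theorem~\ref{main-thm}. So there is no ``paper's own proof'' to compare against.

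That said, your sketch is essentially the Thalmaier--Wang argument from \cite{TW2004}: Feynman--Kac representation of $P_t$ and of the $F$-side semigroup via multiplicative functionals, a Bismut--Elworthy--Li type martingale computation exploiting the intertwining $T\D_\omega-\D_\omega L=\vartheta$ and the compatibility of $\omega$, optimization over the bridge $h$ to produce the factor $a/(1-e^{-at})$, and a final Cauchy--Schwarz against $P_t^0|\beta|$. The condition $(\Delta+\nabla V)d_x\le c(d_x^{-1}+d_x^\delta)$ is indeed what Thalmaier--Wang use to guarantee non-explosion and the passage from local to global formulas. One small imprecision: you write ``a pathwise bound \ldots yields $\|\Theta_t\|^2\le\ldots$'', but the weight $\Theta_t$ contains a stochastic integral of $h'$ against the driving Brownian motion, so the displayed inequality is on $\E\|\Theta_t\|^2$ (via the It\^o isometry), not a pathwise bound; your subsequent Cauchy--Schwarz step already uses it in expectation, so the argument closes, but the wording should be corrected.
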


For more details on the framework, refer to \cite{Lee} and \cite{DT2001}. For some concrete examples included in the above context, for instance, differential forms and spinor bundles, refer to \cite[Section 2]{TW2004}.

\section{Riesz transforms on vector bundles}\hskip\parindent
From now on, we assume that $M$ is a complete and non-compact Riemannian manifold, and \textbf{Hypothesis} (I) holds as well as the other assumptions in Section 2. For some suitable constant $\lambda\in\R$, let us define the (local) Riesz transform $R_\lambda$ associated with the operator $L$ by
 $$R_\lambda\alpha=\D_\omega(-L+\lambda)^{-1/2}\alpha,\quad \alpha\in\Gamma_{C^\infty_c}(E).$$
We shall consider the weak $(1,1)$ boundedness of $R_\lambda$, i.e.,
$$\mu\{|R_\lambda\alpha|>\sigma\}\lesssim\frac{\mu(|\alpha|)}{\sigma},\quad\mbox{for any }\sigma>0\mbox{ and any }\alpha\in \Gamma_{C^\infty_c}(E).$$
Here and in the sequel, we use the notation $f\lesssim g$ if there exists some constant $C>0$, which is independent of the main parameters,  such that $f\leq Cg$.

Let $p_t^0$ be the heat kernel corresponding to $P^0_t$ with respect to $\mu$. Denote $V(x,r)=\mu\big(B(x,r)\big)$ for every ball $B(x,r)\subset M$.

\medskip

\noindent\textbf{Hypothesis} (II). $m\geq2$ and $M$ is complete and non-compact satisfying that
\begin{itemize}
\item[(II.1)] for any $x\in M$, there exists a constant $\delta\in (0,1)$ such that
$$(\Delta+\nabla V)d_x\lesssim (d_x^{-1}+d_x^\delta)$$
outside $\{x\}\cup \cut(x)$;

\item[(II.2)] there exist constants $D\geq1$ and $\kappa\in [0,\frac{m}{m-1})$ such that
$$V(x,\tau r)\leq D \tau^D V(x,r)\exp(\tau^\kappa+r^\kappa),\quad    x\in M,\,\tau\geq1,\,r>0;$$

\item[(II.3)] there exists a constant $c>0$ such that
$$p^0_t(x,y)\lesssim\frac{1}{V(x,t^{1/m})}\exp\left\{-c\Big(\frac{d^m(x,y)}{t}\Big)^{\frac{1}{m-1}}\right\},\quad  t\in (0,1],\, x,y\in M.$$
\end{itemize}

Here are some remarks on \textbf{Hypothesis} (II).

(II.1) is just assumption $\textup{B}1$ on page 115 in \cite{TW2004}. It was pointed out that (see lines 1-2 on page 114 of \cite{TW2004}), if there exists some point $o\in M$ such that $\Ric\gtrsim -(1+d_o^{2\delta})$ and $|\nabla V|\lesssim (1+d_o^\delta)$, then (II.1) holds, where $\Ric$ is the Ricci curvature of $M$. We should mention that, (II.2) is not comparable with the local volume doubling condition assumed in \cite[Theorem 1.2]{CoulhonDuong1999} since $\kappa$ is allowed to be bigger than 1, and in particular, when $m=2$, (II.2) is just assumption $\textup{B}2$ on page 115 in \cite{TW2004}. It is well known that, under the completeness assumption on $M$, (II.2) can be derived from $\Ric-\Hess_V\geq0$ with $V$ bounded, where $\Hess_V$ is the Hessian of $V$.

In our Riemannian manifold setting, (II.3) should be understood as that there exists some $t_0\in(0,1)$ such that, for $t\in(0,t_0]$, (II.3) holds with $m=2$, and for $t\in(t_0,1]$, (II.3) holds with $m\geq2$. However, we should mention that the proofs below still work in the situation that (II.3) holds with $m>2$ for $t\in(0,1]$.

For $m=2$, (II.3) is just the Gaussian upper bound, which is equivalent to the assumption $\textup{B}3$ appeared on page 115 in \cite{TW2004} by assuming (II.2). Note that, even in the short-time situation, the sub-Gaussian upper bound (II.3) with $m>2$ and the Gaussian upper bound (II.3) with $m=2$, are not comparable. Moreover, (II.3) with $m>2$ appears naturally as the upper bound of the transition density of a canonical diffusion process on fractal sets with respect to a proper Hausdorff measure. For instance,  the upper and the lower bounds for the transition density of the natural Brownian motion on the Sierpi\'{n}ski gasket in $\R^2$ are comparable with
$$\frac{1}{t^{\upsilon/m}}\exp\left\{-c\Big(\frac{d^m(x,y)}{t}\Big)^{m-1}\right\}, \quad t>0,$$
where $d(x,y)=|x-y|$, $\upsilon=\log3/\log2$ and $m=\log5/\log2>2$; see e.g. \cite{BP1988}.

Let $\Upsilon\in (0,\infty]$. By \cite[Theorem 1.1]{Grig1997}, under the volume doubling condition \eqref{doubling}, the on-diagonal upper bound
\begin{eqnarray}\label{ondiag-upper-bound}
p_t^0(x,x)\lesssim \frac{1}{V(x,\sqrt{t})},\quad  x\in M,\, t\in (0,\Upsilon),
\end{eqnarray}
self-improves to the Gaussian upper bound, i.e., (II.3) with $m=2$ for all $t\in (0,\Upsilon)$. However, this self-improving property for $m>2$, more precisely, from \eqref{ondiag-upper-bound} to (II.3) for all $t\in(0,\Upsilon)$ under the assumption \eqref{doubling}, may not be true. It seems that to find an example to illustrate that the self-improving property does not hold in the sub-Gaussian situation is quite an interesting open problem.

Now we present the main result of this paper.
\begin{theorem}\label{main-thm}
Suppose that \textbf{Hypotheses} (I) and (II) hold and $R_\lambda$ is bounded in $\Gamma_{L^2_\mu}(E)$. Then,
\begin{itemize}
\item[(1)] for $\lambda>-a_1$ (which is specified in (I.1)), $R_\lambda$ is weak $(1,1)$ and bounded in $\Gamma_{L^p_\mu}(E)$ for all $1<p\leq2$;
\item[(2)] if $\vartheta=0$, and either $a_1>0$ or $a_1=\kappa=0$, %and (II.3) holds for all $t>0$,
then $R_0$ is weak $(1,1)$ and bounded in $\Gamma_{L^p_\mu}(E)$ for all $1<p\leq2$.
\end{itemize}
\end{theorem}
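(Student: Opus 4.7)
The plan is to establish weak $(1,1)$ boundedness via a Calder\'on--Zygmund decomposition on the space $(M,d,\mu)$, and then to invoke Marcinkiewicz interpolation with the hypothesized $L^2$ boundedness to deduce $L^p$ boundedness for $1<p\le 2$. The workhorse is the subordination formula
\[R_\lambda\alpha=\frac{1}{\sqrt{\pi}}\int_0^\infty e^{-\lambda s}\,\D_\omega P_{2s}\alpha\,\frac{ds}{\sqrt{s}},\qquad \alpha\in\Gamma_{C_c^\infty}(E),\]
which converts every estimate on $R_\lambda$ to one on $(P_t)_{t\ge 0}$, against which Theorem~\ref{deri-estimate} can be applied directly.

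Fix $\sigma>0$ and $\alpha\in\Gamma_{C_c^\infty}(E)$. Using the generalized doubling provided by (II.2), I will run a Coifman--Weiss style Calder\'on--Zygmund decomposition to write $\alpha=g+\sum_j b_j$, where each $b_j$ is supported on a ball $B_j=B(x_j,r_j)$, has mean zero, and $\sum_j\|b_j\|_1\lesssim\|\alpha\|_1$, $\|g\|_2^2\lesssim\sigma\|\alpha\|_1$. Chebyshev combined with the $L^2$ hypothesis disposes of $g$, while $\mu\bigl(\bigcup_j 2B_j\bigr)\lesssim\sigma^{-1}\|\alpha\|_1$ follows from doubling. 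The core of the proof is then the H\"ormander-type bound
\[\sum_j\int_{(2B_j)^c}|R_\lambda b_j|_F\,d\mu\lesssim\sum_j\|b_j\|_1,\]
for which the subordination integral is split at $s=r_j^m$. On the far range $s\ge r_j^m$, I will write $P_{2s}=P_s\circ P_s$, apply Theorem~\ref{deri-estimate} to $P_sb_j$, and combine with (II.3) to obtain a sub-Gaussian off-diagonal estimate for $\D_\omega P_{2s}b_j$ that is jointly integrable in $s$ and in $x\in(2B_j)^c$. On the near range $0<s<r_j^m$, I will exploit the mean-zero property of $b_j$ to introduce the increment of the kernel of $\D_\omega P_{2s}$ in the $y$ variable; bounding this increment by a further appeal to Theorem~\ref{deri-estimate} and time-derivative estimates of $P_s^0$ yields an extra factor that combines with $s^{-1/2}\,ds$ to produce a convergent integral.

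The main technical obstacle is that Theorem~\ref{deri-estimate} controls $|\D_\omega P_t\beta|$ only in terms of $\|\beta\|_\infty$, which is unbounded on the $L^1$ pieces $b_j$. The remedy is precisely the composition trick $P_{2s}=P_s\circ P_s$: the bound $\|P_sb_j\|_\infty\lesssim\sup_{y\in B_j}p_s^0(\newdot,y)\,\|b_j\|_1$, supplied by (II.3), injects the correct inverse-volume scaling into the estimate and combines cleanly with the sub-Gaussian tail to close the integrals. A secondary difficulty appears in part~(2), where $\lambda=0$ removes the spectral damping $e^{-\lambda s}$. Here the assumption $\vartheta=0$ deletes the growing prefactor $C(\vartheta)\sqrt{s}/2$ from Theorem~\ref{deri-estimate}, and the dichotomy $a_1>0$ (direct exponential decay in $s$) or $a_1=\kappa=0$ (polynomial volume growth from (II.2), together with (II.3) now assumed for all $t>0$) is exactly what is needed for the subordination integral to converge at $s=\infty$. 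Once these two points are handled, justifying the subordination on bundle-valued sections and passing from weak $(1,1)$ to $L^p$ are routine.
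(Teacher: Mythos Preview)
Your outline follows the classical H\"ormander-condition route (mean-zero atoms, kernel increments in the $y$ variable), and that is precisely the route the hypotheses of this paper do \emph{not} support. Two concrete problems:

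\medskip
\textbf{(i) Mean-zero atoms for sections.} You write ``$\alpha=g+\sum_j b_j$ with each $b_j$ of mean zero''. For sections of a nontrivial bundle $E$ the integral $\int_{B_j} b_j\,d\mu$ has no intrinsic meaning: the values $b_j(y)$ live in different fibers $E_y$. One can force a meaning via parallel transport to a fixed fiber, but then the ``cancellation'' you hope to exploit depends on the connection and on curvature, and does not interact well with the kernel of $\D_\omega P_{2s}$, which is itself a homomorphism $E_y\to F_x$. The paper sidesteps this by running the Calder\'on--Zygmund decomposition on the \emph{scalar} function $|\alpha|$ and then lifting the pieces back to sections via $\tilde h:=h\,\alpha/|\alpha|\,\mathbf{1}_{\{|\alpha|>0\}}$; no cancellation is ever used.

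\medskip
\textbf{(ii) The near-range estimate.} Even granting mean-zero atoms, your plan for $0<s<r_j^m$ is to bound ``the increment of the kernel of $\D_\omega P_{2s}$ in the $y$ variable'' via Theorem~\ref{deri-estimate} and time-derivative bounds on $P^0_s$. But Theorem~\ref{deri-estimate} controls $|\D_\omega P_t\beta|$ only through $\|\beta\|_\infty$ and $P^0_t|\beta|$; it gives no H\"older or Lipschitz information on the kernel in the second variable. The only structural assumption on $p^0_t$ is the pointwise upper bound (II.3), and an upper bound alone does not yield $|p^0_t(x,y)-p^0_t(x,y')|$ estimates (these typically require a parabolic Harnack inequality or a gradient bound, neither of which is assumed). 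Converting the spatial increment into a time derivative is not available either: analyticity controls $\partial_t p^0_t$, not $\nabla_y p^0_t$. This is exactly the obstruction the introduction flags (``similar gradient estimates seem not easy to get for heat kernels on vector bundles'').

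\medskip
The paper's proof replaces both steps by the Coulhon--Duong smoothing device: with $t_i=r_i^m$ one writes
\[
R_\lambda\tilde b_i=R_\lambda\bigl(e^{-\lambda t_i}P_{2t_i}\tilde b_i\bigr)+R_\lambda\bigl(I-e^{-\lambda t_i}P_{2t_i}\bigr)\tilde b_i.
\]
The first term is handled in $L^2$ via semigroup domination and a duality/maximal-function argument (no cancellation needed). For the second term the subordination identity collapses to
\[
(-L+\lambda)^{-1/2}\bigl(I-e^{-\lambda t_i}P_{2t_i}\bigr)=\frac{1}{\sqrt{\pi}}\int_0^\infty\Bigl(\frac{1}{\sqrt{s}}-\frac{\mathbf{1}_{\{s>t_i\}}}{\sqrt{s-t_i}}\Bigr)e^{-\lambda s}P_{2s}\,ds,
\]
so the ``near-range'' cancellation is purely algebraic in the $s$-integrand, and no $y$-regularity of any kernel is required. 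Your composition trick $P_{2s}=P_s\circ P_s$ and the use of Theorem~\ref{deri-estimate} with $\beta=P_s\tilde b_i$ are correct and are exactly what the paper does at that stage; it is the handling of small $s$ and the bundle-valued CZ decomposition that need to be replaced.
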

We should mention that $R_\lambda$ being bounded in $\Gamma_{L^2_\mu}(E)$ is not such a restrictive condition, since in many geometric applications, $\D_\omega$ is just the Dirac operator and $-L$ is just the square of the Dirac operator; for instance, the Hodge Laplacian acting on differential forms, and in that case, $R_\lambda$ is trivially bounded in $\Gamma_{L^2_\mu}(E)$. See also \cite[REMARK 4.5 and COROLLARY 4.6]{TW2004}.

We should point out that, even the result in Theorem \ref{main-thm} in the particular case when $m=2$ and hence $\kappa\in[0,2)$, is not completely covered by the main result \cite[THEOREM 4.1(ii)]{TW2004}, since we do not assume that (II.3) holds for all $t>0$ when $\kappa=0$. Moreover, the method employed below effectively deals with the aforementioned situation and the general case when $m>2$ and $0\leq\kappa<\frac{m}{m-1}$  simultaneously.

In order to prove Theorem \ref{main-thm}, we need some lemmata.  First we present the following one, which shows that a fundamental estimate similar to \cite[Lemma 2.1]{CoulhonDuong1999} also holds under our generalized volume doubling property (II.2).
\begin{lemma}\label{lemma-1}
Suppose that (II.2) holds. Then, for any $r\geq0$ and any $\eta>0$,
\begin{eqnarray*}
&&\int_{M\setminus B(y,r)}\exp\left\{-\eta\Big(\frac{d^m(x,y)}{t}\Big)^{\frac{1}{m-1}}\right\}\,\d\mu(x)\cr
&\lesssim& V(y,t^{1/m})\exp\Big\{-\frac{\eta}{2}\Big(\frac{r^m}{t}\Big)^{\frac{1}{m-1}}\Big\},\quad  y\in M.
\end{eqnarray*}
\end{lemma}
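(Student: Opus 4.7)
The strategy follows the classical pattern behind \cite[Lemma 2.1]{CoulhonDuong1999}, adapted to the sub-Gaussian exponent and to the weaker volume growth encoded in (II.2). The first move is to split the integrand as the product of two copies of $\exp\{-\tfrac{\eta}{2}(d^m(x,y)/t)^{1/(m-1)}\}$. Since $d(x,y)\geq r$ on $M\setminus B(y,r)$, one copy is pointwise bounded by $\exp\{-\tfrac{\eta}{2}(r^m/t)^{1/(m-1)}\}$ and pulls outside the integral, leaving the task of proving
$$\int_M \exp\left\{-\tfrac{\eta}{2}\Big(\tfrac{d^m(x,y)}{t}\Big)^{\frac{1}{m-1}}\right\}\,\d\mu(x)\lesssim V(y,t^{1/m}).$$

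To bound this, the plan is to decompose $M$ into the central ball $B(y,t^{1/m})$ and the dyadic annuli $A_k:=B(y,2^{k+1}t^{1/m})\setminus B(y,2^kt^{1/m})$ for $k\geq 0$, calibrated at exactly the scale $t^{1/m}$ appearing in the target. On the central ball the integrand does not exceed $1$, giving a contribution $\leq V(y,t^{1/m})$. On $A_k$ the inequality $d(x,y)\geq 2^k t^{1/m}$ gives the pointwise bound $\exp\{-\tfrac{\eta}{2}\,2^{km/(m-1)}\}$, while (II.2) applied with $r=t^{1/m}$ and $\tau=2^{k+1}$ yields $V(y,2^{k+1}t^{1/m})\leq D\,2^{(k+1)D}\,V(y,t^{1/m})\,\exp(2^{(k+1)\kappa}+t^{\kappa/m})$. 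Hence the $A_k$ contribution is majorised by $V(y,t^{1/m})$ times
$$D\,2^{(k+1)D}\,\exp\{t^{\kappa/m}\}\,\exp\left\{2^{(k+1)\kappa}-\tfrac{\eta}{2}\,2^{km/(m-1)}\right\}.$$

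The final step is to sum in $k$. This is where the exponent constraint $\kappa<m/(m-1)$ in (II.2) is decisive: as $k\to\infty$, the term $2^{(k+1)\kappa}$ is swamped by $\tfrac{\eta}{2}\,2^{km/(m-1)}$, so the series converges to a constant depending only on $\eta,D,\kappa,m$. The main technical nuisance I anticipate is the factor $\exp(t^{\kappa/m})$ produced by the $r^\kappa$-term in (II.2); this is not an essential obstruction, since in every envisaged application of the lemma the time parameter $t$ is confined to a bounded range (either $t\in(0,1]$ under (II.3), or inside the window supplied by the semigroup decay in Theorem~\ref{deri-estimate}), so the factor is absorbed into the implicit constant in $\lesssim$. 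Combining the two contributions yields the claimed inequality with the prescribed exponential decay in $r$ coming from the peeled-off factor.
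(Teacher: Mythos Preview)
Your reduction to bounding $\int_M \exp\{-\tfrac{\eta}{2}(d^m(x,y)/t)^{1/(m-1)}\}\,\d\mu(x)$ by $V(y,t^{1/m})$, and your annulus argument for that bound, are correct when $t$ is confined to $(0,1]$; the paper does essentially the same thing there (with linear annuli $B(y,(i+1)t^{1/m})\setminus B(y,it^{1/m})$ rather than dyadic ones, which is immaterial). The gap is in how you dispose of the factor $\exp(t^{\kappa/m})$ produced by the $r^\kappa$-term of (II.2). You assert that the lemma is only ever applied with $t$ in a bounded range, so this factor can be absorbed into the implicit constant; that assertion is false. The lemma is invoked in the proof of Lemma~\ref{UE} through an induction that bootstraps the heat-kernel bound from $(0,1]$ to all of $(0,\infty)$, and it is used again in the estimates of $J_1$ and $J_2$ in the proof of Theorem~\ref{main-thm}, where $s$ runs over $(0,\infty)$. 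So the inequality has to hold uniformly in $t>0$, and your argument does not deliver this.

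The paper closes this gap by treating $t>1$ with a separate, $t$-dependent decomposition: annuli $B_i=B\big(y,(it^{1/m}+1)t^{1/m}\big)$. On $B_i\setminus B_{i-1}$ the exponent $(d^m/t)^{1/(m-1)}$ is at least $\big((i-1)t^{1/m}+1\big)^{m/(m-1)}$, which carries a power $t^{1/(m-1)}$; since the constraint $\kappa<\tfrac{m}{m-1}$ is equivalent to $\tfrac{\kappa}{m}<\tfrac{1}{m-1}$, this decay dominates the growth $e^{O(t^{\kappa/m})}$ contributed by (II.2), and the series over $i$ converges uniformly in $t$. Your dyadic radii $2^k t^{1/m}$ cannot achieve this, because the resulting decay exponent $2^{km/(m-1)}$ is independent of $t$, so nothing is available to cancel the $e^{t^{\kappa/m}}$.
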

\begin{proof}
Let $r\geq0$ and $\eta>0$. Then
\begin{eqnarray}\label{lemma-eq-1}
&&\int_{M\setminus B(y,r)}\exp\left\{-\eta\Big(\frac{d^m(x,y)}{t}\Big)^{\frac{1}{m-1}}\right\}\,\d\mu(x)\cr
&\lesssim& e^{-\frac{\eta}{2}(\frac{r^m}{t})^{\frac{1}{m-1}}} \int_M\exp\Big\{-\frac{\eta}{2}\Big(\frac{d^m(x,y)}{t} \Big)^{\frac{1}{m-1}}\Big\}\,\d\mu(x).
\end{eqnarray}

For $t\in (0,1]$, applying (II.2), we have
\begin{eqnarray}\label{lemma-eq-2}
&&\int_M\exp\Big\{-\frac{\eta}{2}\Big(\frac{d^m(x,y)}{t} \Big)^{\frac{1}{m-1}}\Big\}\,\d\mu(x)\cr
&\leq& \sum_{i=0}^\infty \int_{B(y,(i+1)t^{1/m})\setminus B(y,it^{1/m}) }\exp\Big\{-\frac{\eta}{2}\Big(\frac{d^m(x,y)}{t} \Big)^{\frac{1}{m-1}}\Big\}\,\d\mu(x)\cr
&\leq& \sum_{i=0}^\infty V(y,(i+1)t^{1/m})\exp\left\{-\frac{\eta}{2}i^{\frac{m}{m-1}}\right\}\cr
&\lesssim& V(y,t^{1/m})\sum_{i=0}^\infty (i+1)^D \exp\left\{(i+1)^\kappa -\frac{\eta}{2}i^{\frac{m}{m-1}}\right\}\cr
&\lesssim& V(y,t^{1/m}),
\end{eqnarray}
where the last line is due to the assumption that $0\leq\kappa<\frac{m}{m-1}$ in (II.2).

For $t>1$, letting  $B_i=B\big(y,(it^{1/m}+1)t^{1/m}\big)$, $i=0,1,2,\cdots$, we have
\begin{eqnarray}\label{lemma-eq-3}
&&\int_M\exp\Big\{-\frac{\eta}{2}\Big(\frac{d^m(x,y)}{t} \Big)^{\frac{1}{m-1}}\Big\}\,\d\mu(x)\cr
&\leq& \mu(B_0) + \sum_{i=1}^\infty \int_{B_i\setminus B_{i-1} }\exp\Big\{-\frac{\eta}{2}\Big(\frac{d^m(x,y)}{t} \Big)^{\frac{1}{m-1}}\Big\}\,\d\mu(x)\cr
&\leq& \mu(B_0) +  \sum_{i=1}^\infty  \mu(B_i) \exp\Big\{-\frac{\eta}{2}[(i-1)t^{1/m}+1]^{\frac{m}{m-1}}\Big\} \cr
&\lesssim& \mu(B_0)\Big[1+ \sum_{i=1}^\infty (it^{1/m}+1)^D \exp\Big\{(it^{1/m}+1)^\kappa + t^{\frac{\kappa}{m}}-\frac{\eta}{2}[(i-1)t^{1/m}+1]^{\frac{m}{m-1}}\Big\} \Big]\cr
&\lesssim& \mu(B_0)\Big[1+ \sum_{i=1}^\infty (it^{1/m}+1)^D \exp\Big\{(2i^\kappa+1) t^{\kappa/m} -\frac{\eta}{2}(i-1)^{\frac{m}{m-1}}t^{\frac{1}{m-1}}\Big\} \Big]\cr
&\lesssim& V(y,t^{1/m}),
\end{eqnarray}
where the last line is again due to that $0\leq\kappa<\frac{m}{m-1}$.

Combing the estimates \eqref{lemma-eq-1}, \eqref{lemma-eq-2} and \eqref{lemma-eq-3}, we complete the proof.
\end{proof}

Next we establish the following result, which means that, under the generalized volume doubling condition (II.2), the short-time upper bound of the heat kernel in (II.3) self-improves to the full-time upper bound with an extra exponential term depending on the time parameter.
\begin{lemma}\label{UE}
Let $m\geq2$. Suppose that (II.2) and (II.3) hold. Then, there exist constants $c_1,c_2>0$ such that
$$p_t^0(x,y)\lesssim\frac{1}{V(y,t^{1/m})}\exp\left\{-c_1\Big(\frac{d^m(x,y)}{t}\Big)^{\frac{1}{m-1}} + c_2t^{\frac{\kappa}{m}}\right\},\quad   x,y\in M,\, t>0.$$
\end{lemma}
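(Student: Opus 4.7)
My plan is to bootstrap the short-time bound (II.3) to all $t>0$, with the correction factor $\exp(c_2 t^{\kappa/m})$ arising from repeated applications of the generalized doubling (II.2).

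\medskip

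\noindent The first step is to recast (II.3) in the target form for $t\in(0,1]$. Using the trivial inclusion $V(x,t^{1/m})\leq V(y,t^{1/m}+d(x,y))$ and applying (II.2) with $r=t^{1/m}$ and $\tau=1+d(x,y)/t^{1/m}$ yields
\begin{align*}
\frac{1}{V(x,t^{1/m})}\leq \frac{D\bigl(1+d(x,y)/t^{1/m}\bigr)^D\exp\bigl\{(1+d(x,y)/t^{1/m})^\kappa+t^{\kappa/m}\bigr\}}{V(y,t^{1/m})}.
\end{align*}
Since $\kappa<m/(m-1)$, the polynomial and exponential factors in $d(x,y)/t^{1/m}$ are dominated by the sub-Gaussian factor $\exp\{-c(d^m(x,y)/t)^{1/(m-1)}\}$ after slightly shrinking $c$, leaving the residual factor $\exp(c_2 t^{\kappa/m})$. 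Combined with (II.3) this establishes the claim for $t\in(0,1]$.

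\medskip

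\noindent The second, more delicate, step is the extension to $t>1$. My approach is Chapman--Kolmogorov chaining: writing $t=Ns$ with $N=\lceil t\rceil$ and $s\in(1/2,1]$,
\begin{align*}
p_t^0(x,y)=\int_{M^{N-1}}\prod_{i=0}^{N-1} p_s^0(z_i,z_{i+1})\,\d\mu(z_1)\cdots \d\mu(z_{N-1}),
\end{align*}
with $z_0=x$, $z_N=y$. Apply the first-step bound to each factor. The product of short-time sub-Gaussian factors is aggregated using subadditivity of $u\mapsto u^{1/(m-1)}$ (valid since $m\geq 2$) together with the power-mean inequality $\sum_i d^m(z_i,z_{i+1})\geq d^m(x,y)/N^{m-1}$. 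The intermediate integrations are handled by Lemma~\ref{lemma-1}, which matches the sub-Gaussian tails against the reciprocal volumes $1/V(z_i,s^{1/m})$, while (II.2) is used to convert the remaining volume factors to $V(y,t^{1/m})^{-1}$; the accumulated errors produce the correction $\exp(c_2 t^{\kappa/m})$, with $\kappa<m/(m-1)$ ensuring uniform finiteness of the tail integrals in Lemma~\ref{lemma-1}.

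\medskip

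\noindent The main obstacle is retaining the full sub-Gaussian exponent $(d^m(x,y)/t)^{1/(m-1)}$ through the chaining: a naive chain loses a factor of $N^{(2-m)/(m-1)}$ in the exponent, which is nontrivial when $m>2$. A refined treatment is required here, either by choosing the partition $t=Ns$ adaptively in $d(x,y)$ so that the optimized chain recovers the sharp exponent, or by invoking a Davies-type perturbation argument adapted to the sub-Gaussian regime. In either case the condition $\kappa<m/(m-1)$ is used decisively, both to absorb the $(d/t^{1/m})^\kappa$ terms arising from the volume conversions and to keep the accumulated doubling errors bounded by $\exp(c_2 t^{\kappa/m})$.
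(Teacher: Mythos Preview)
Your outline diverges from the paper's, and it leaves the central difficulty unresolved. You correctly identify the obstacle: a naive $N$-fold Chapman--Kolmogorov chain, combined with the power-mean estimate $\sum_i d^m(z_i,z_{i+1})\geq N^{1-m}d^m(x,y)$, degrades the sub-Gaussian exponent by a factor $N^{(2-m)/(m-1)}$, which vanishes as $t\to\infty$ when $m>2$. You then gesture at two repairs (adaptive partitioning, a Davies-type perturbation) without implementing either; in the sub-Gaussian regime neither is routine, and as written your argument does not deliver a $t$-independent constant $c_1$. This is a genuine gap, not a detail to be filled in later.

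The paper avoids the $N$-fold chain altogether and instead doubles time iteratively. From $p_{2s}^0(x,y)=\int_M p_s^0(x,z)p_s^0(y,z)\,\d\mu(z)$ one inserts the weights $\exp\{\tfrac{\gamma}{2}(d^m(x,z)/s)^{1/(m-1)}\}$ and $\exp\{\tfrac{\gamma}{2}(d^m(y,z)/s)^{1/(m-1)}\}$, then uses the elementary inequality $a^q+b^q\geq 2^{1-q}(a+b)^q$ with $q=m/(m-1)\geq 1$ together with the triangle inequality to extract a clean off-diagonal factor $\exp\{-c_{\gamma,m}(d^m(x,y)/s)^{1/(m-1)}\}$. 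Cauchy--Schwarz separates what remains into two weighted $L^2$ quantities
\[
N_\gamma(x,s)=\int_M p_s^0(x,z)^2\exp\Big\{\gamma\Big(\frac{d^m(x,z)}{s}\Big)^{1/(m-1)}\Big\}\,\d\mu(z),
\]
each bounded by $1/V(\cdot,s^{1/m})$ via Lemma~\ref{lemma-1}. The conversion $V(x,\cdot)\to V(y,\cdot)$ through (II.2) is precisely your first step and generates the $e^{c_2 s^{\kappa/m}}$ correction. The point is that at every doubling the off-diagonal decay is rebuilt from only \emph{two} pieces via convexity, never from $N$ simultaneously, so the power-mean loss you flag does not compound across a long chain; the intermediate integration is absorbed by the single Cauchy--Schwarz/$N_\gamma$ device rather than by $N-1$ successive applications of Lemma~\ref{lemma-1}. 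This is the mechanism your proposal is missing.
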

\begin{proof}
The main idea of proof is based on the method of induction.

(1) Let $0<t\leq1$. By the symmetry and semigroup property of the heat kernel $p^0_t$, we have
\begin{eqnarray*}
p_{2t}^0(x,y)&=&\int_M p_t^0(x,z)p_t^0(y,z)\,\d\mu(z).
\end{eqnarray*}
Applying the inequality
$$a^q+b^q\geq 2^{1-q}(a+b)^q,\quad \forall\,a,b\geq0, q\geq1, $$
and the triangle inequality $d(x,z)+d(y,z)\geq d(x,y)$, we obtain that, for any $0<\gamma<2c$ with the same $c$ in (II.3),
\begin{eqnarray*}
p_{2t}^0(x,y)
&\leq&\int_M p_t^0(x,z)\exp\left\{\frac{\gamma}{2}\Big(\frac{d^m(x,z)}{t}\Big)^{\frac{1}{m-1}}\right\}p_t^0(y,z)
\exp\left\{\frac{\gamma}{2}\Big(\frac{d^m(y,z)}{t}\Big)^{\frac{1}{m-1}}\right\}\\
&&\times\exp\left\{-c_{\gamma,m}\Big(\frac{d^m(x,y)}{t}\Big)^{\frac{1}{m-1}}\right\}\,\d\mu(z),
\end{eqnarray*}
where $c_{\gamma,m}=\gamma2^{-m/(m-1)}$. Set
$$N_\gamma(x,t)=\int_Mp_t^0(x,z)^2\exp\left\{\gamma\Big(\frac{d^m(x,z)}{t}\Big)^{\frac{1}{m-1}}\right\}\,\d\mu(z).$$
By the Cauchy--Schwarz inequality, we have
\begin{eqnarray*}
p_{2t}^0(x,y)\leq\big(N_\gamma(x,z,t)N_\gamma(y,z,t)\big)^{1/2}\exp\left\{-c_{\gamma,m}\Big(\frac{d^m(x,y)}{t}\Big)^{\frac{1}{m-1}}\right\}.
\end{eqnarray*}
Applying Lemma \ref{lemma-1} with $r=0$, we deduce that
\begin{eqnarray*}
N_\gamma(x,t)&\lesssim&\frac{1}{\big[V(x,t^{1/m})\big]^2}\int_M
\exp\left\{-(2c-\gamma)\Big(\frac{d^m(x,z)}{t}\Big)^{\frac{1}{m-1}}\right\}\,\d\mu(z)\\
&\lesssim&\frac{1}{V(x,t^{1/m})},
\end{eqnarray*}
and
\begin{eqnarray*}
N_\gamma(y,t)\lesssim\frac{1}{V(y,t^{1/m})}.
\end{eqnarray*}
Hence,
\begin{eqnarray*}
p_{2t}^0(x,y)\lesssim\Big(\frac{1}{V(x,t^{1/m})}\frac{1}{V(y,t^{1/m})}\Big)^{1/2}
\exp\left\{-c_{\gamma,m}\Big(\frac{d^m(x,y)}{t}\Big)^{\frac{1}{m-1}}\right\}.
\end{eqnarray*}
By (II.2), $V(y,(2t)^{1/m}) \lesssim V(y,t^{1/m}) e^{t^{\kappa/m}}$, and
\begin{eqnarray*}
V(y,(2t)^{1/m}) &\leq& V(x,d(x,y)+(2t)^{1/m})\\
&\lesssim& V(x,t^{1/m}) \Big(\frac{d(x,y)+(2t)^{1/m}}{t^{1/m}}\Big)^D\\
&&\times\exp\Big\{t^{\kappa/m}+\Big(\frac{d(x,y)+(2t)^{1/m}}{t^{1/m}}\Big)^\kappa\Big\},
\end{eqnarray*}
which imply that
\begin{eqnarray}\label{UE-pf-2}
p_{2t}^0(x,y)&\lesssim&\left( \frac{e^{t^{\kappa/m}}\Big(1+\frac{d(x,y)}{t^{1/m}}\Big)^D \exp\Big[t^{\kappa/m}+\Big(\frac{d(x,y)}{t^{1/m}}\Big)^\kappa\Big]}{\big[V(y,(2t)^{1/m})\big]^2}\right)^{1/2}\cr
&&\times\exp\left\{-c_{\gamma,m}\Big(\frac{d^m(x,y)}{t}\Big)^{\frac{1}{m-1}}\right\}\cr
&\lesssim&\frac{1}{V(y,(2t)^{1/m})}\exp\left\{-\tilde{c}_{\gamma,m}\Big(\frac{d^m(x,y)}{2t}\Big)^{\frac{1}{m-1}}+(2t)^{\kappa/m}\right\},
\end{eqnarray}
for some constant $\tilde{c}_{\gamma,m}>0$. The last inequality of \eqref{UE-pf-2} holds by the assumption that $\kappa<\frac{m}{m-1}$, and the inequality $(1+\xi)^{D/2}e^{-C\xi}\leq C_1e^{-C_2 \xi}$, for some $C_1,C_2>0$ and any $\xi\geq0$, where $C$ is a positive constant.

(2) Similar as step (1) above, for any $t\in(0,1]$ and any $\epsilon\in(0,2c_{\kappa,\gamma,m})$, we have
\begin{eqnarray*}
p_{4t}^0(x,y)\leq\big(N_\epsilon(x,2t)N_\epsilon(y,2t)\big)^{1/2}\exp\left\{-c_{\epsilon,m}\Big(\frac{d^m(x,y)}{2t}\Big)^{\frac{1}{m-1}}\right\},
\end{eqnarray*}
for some constants $c_{\epsilon,m}>0$, where, by applying \eqref{UE-pf-2} and Lemma \ref{lemma-1},
\begin{eqnarray*}
N_\epsilon(x,2t)&\lesssim& \int_M\exp\left\{-(2c_{\kappa,\gamma,m}-\epsilon)\Big(\frac{d^m(x,z)}{2t}\Big)^{\frac{1}{m-1}} + (2t)^{\frac{\kappa}{m}}\right\}\,\d\mu(z)\\
%&\lesssim&\frac{\exp\big\{(2t)^{\kappa/m}\big\}}{\big[V(x,(2t)^{1/m})\big]^2}\int_M\exp\left\{-(2c_{\kappa,\gamma,m}-\epsilon)
%\Big(\frac{d^m(x,z)}{2t}\Big)^{\frac{1}{m-1}} \right\}\,\d\mu(z)\\
%&\lesssim& \frac{\exp\big\{(2t)^{\kappa/m}\big\}}{V(x,(2t)^{1/m})}
&\lesssim& \frac{e^{(2t)^{\kappa/m}}}{V(x,(2t)^{1/m})},
\end{eqnarray*}
and
$$N_\epsilon(y,2t)\lesssim \frac{e^{(2t)^{\kappa/m}}}{V(y,(2t)^{1/m})}.$$
Hence, by (II.2),
\begin{eqnarray*}
p_{4t}^0(x,y)&\lesssim& \left(\frac{e^{(2t)^{\kappa/m}}}{V(x,(2t)^{1/m})} \frac{e^{(2t)^{\kappa/m}}}{V(y,(2t)^{1/m})} \right)^{1/2}\exp\left\{-c_{\epsilon,m}\Big(\frac{d^m(x,y)}{2t}\Big)^{\frac{1}{m-1}}\right\}\\
&\lesssim& \left( \frac{\Big(2^{1/m}+\frac{d(x,y)}{(2t)^{1/m}}\Big)^D  \exp\Big\{(2t)^{\kappa/m}+\Big(2^{1/m}+\frac{d(x,y)}{(2t)^{1/m}}\Big)^\kappa \Big\} }{V(y,(4t)^{1/m})}  \right)^{1/2}\\
&&\times  \exp\left\{-c_{\epsilon,m}\Big(\frac{d^m(x,y)}{2t}\Big)^{\frac{1}{m-1}}\right\}\\
&\lesssim&  \frac{1}{V(y,(4t)^{1/m})}\exp\left\{-\tilde{c}_{\epsilon,m}\Big(\frac{d^m(x,y)}{4t}\Big)^{\frac{1}{m-1}} + (4t)^{\frac{\kappa}{m}} \right\},
\end{eqnarray*}
for some constant $\tilde{c}_{\epsilon,m}>0$.

(3) Finally, for any $t\in (0,\infty)$, there exists a non-negative integer $N$ such that $t/2^N\in (0,1]$. By the method of induction, similar as the calculation in steps (1) and (2), we complete the proof.
\end{proof}

With these preparations in hand, now we begin to prove Theorem \ref{main-thm}. In fact, the main idea of proof is motivated by \cite{CoulhonDuong1999} and \cite{TW2004}. However, we need some new tricks; see e.g. \eqref{equ-modified} and the proof of \eqref{pf-main-3} below. Let $\mathbf{1}_A$ denote the indicator function of the set $A$. For $c>0$, denote $cB(x,r)=B(x,cr)$ for every ball $B(x,r)$ in $M$.

\begin{proof}[Proof of Theorem \ref{main-thm}]
Let $\alpha\in \Gamma_{C^\infty_c}(E)$, $\sigma>0$ and $N\in\mathbb{N}$. There exists a partition of the support of $\alpha$, denoted by $(E_n)_{n=1}^{N}$, such that each $E_n$ is a bounded domain with diameter no bigger than 1. For each $n\in\{1,2,\cdots,N\}$, we take use of the Calder\'{o}n--Zygmund decomposition (see \cite[LEMMA 4.3]{TW2004}) for $|\alpha|\mathbf{1}_{E_n}$, and then patch them together to obtain that
$$|\alpha|=g+b:=g+\sum_{i=1}^m b_i,$$
for some $m\in\mathbb{N}$, where $g$ and $b_i$ are functions on $M$, and find a sequence of balls $B_i=B(x_i,r_i)$ with $x_i\in M$ and $r_i\in(0,1]$ such that
\begin{itemize}
\item[(a)] $0\leq g(x)\lesssim  \sigma$ for $\mu$-a.e. $x\in M$;
\item[(b)] each $b_i$ is supported in $B_i$ and $\|b_i\|_1\lesssim \sigma\mu(B_i)$;
\item[(c)] $\sum_i\mu(B_i)\lesssim \|\alpha\|_1/ \sigma$;
\item[(d)] every point of $M$ is contained in at most finitely many balls in $\{B_i\}_i$.
\end{itemize}
(See also the proof of \cite[THEOREM 4.1]{TW2004} on pp. 116--117.) It follows immediately from (b) and (c) that
$$\|b\|_1\leq\sum_i \|b_i\|_1\lesssim\|\alpha\|_1,$$and hence $\|g\|_1\lesssim\|\alpha\|_1$.

Since $R_\lambda=\D_m(-L+\lambda)^{-1/2}$, we need to prove that
\begin{eqnarray*}
\mu\{|R_\lambda\alpha|>\sigma\}\lesssim\frac{\|\alpha\|_1}{\sigma},\quad\mbox{for any }\sigma>0\mbox{ and any }\alpha\in \Gamma_{C^\infty_c}(E).
\end{eqnarray*}

For any function $h$ defined on $M$, we let $$\tilde{h}:=h\frac{\alpha}{|\alpha|}\mathbf{1}_{\{|\alpha|>0\}}.$$ Applying the Calder\'{o}n--Zygmund decomposition of $|\alpha|$ at the level $\sigma$, we have that
\begin{eqnarray}\label{pf-main-1-1}
\mu\{|R_\lambda\alpha|>\sigma\}\leq\mu\{|R_\lambda \tilde{g}|>\sigma/2\} + \mu\{|R_\lambda \tilde{b}|>\sigma/2\}.
\end{eqnarray}
Since $R_\lambda$ is assumed to be bounded in $\Gamma_{L^2_{\mu}}(E)$, by (a),  we derive that
\begin{eqnarray}\label{pf-main-1-2}
\mu\{|R_\lambda \tilde{g}|>\sigma/2\}\lesssim \|R_\lambda \tilde{g}\|_2^2/\sigma^2\lesssim \|\tilde{g}\|_2^2/\sigma^2\lesssim \|g\|_1/\sigma\lesssim\|\alpha\|_1/\sigma.
\end{eqnarray}
Hence, it remains to prove that
$$\mu\Big\{|R_\lambda\big(\sum_i\tilde{b}_i\big)|>\sigma/2\Big\} \lesssim \frac{\|\alpha\|_1}{\sigma}.$$

Let $t_i=r_i^m$. We can write
\begin{eqnarray}\label{equ-modified}
R_\lambda \tilde{b}_i=R_\lambda e^{-\lambda t_i}P_{2t_i}\tilde{b}_i+R_\lambda(I-e^{-\lambda t_i}P_{2t_i})\tilde{b}_i.
\end{eqnarray}
Note that the extra term $e^{-\lambda t_i}$ in equation \eqref{equ-modified} is important for achieving our aim. Then, we have
\begin{eqnarray}\label{pf-main-1}
\mu\Big\{|R_\lambda\big(\sum_i \tilde{b}_i\big)|>\sigma/2\Big\}&\leq& \mu\Big\{|R_\lambda\big(\sum_i  e^{-\lambda t_i}P_{2t_i}\tilde{b}_i\big)|>\sigma/4\Big\}\cr
&& + \mu\Big\{|R_\lambda\big(\sum_i (I- e^{-\lambda t_i}P_{2t_i})\tilde{b}_i\big)|>\sigma/4\Big\}.
\end{eqnarray}

We start to estimate the first term on the right hand of \eqref{pf-main-1}. Since $R_\lambda$ is assumed to be bounded in $\Gamma_{L^2_{\mu}}(E)$, by Chebyshev's inequality, we deduce that
\begin{eqnarray*}
\mu\Big\{|R_\lambda\big(\sum_i  e^{-\lambda t_i}P_{2t_i}\tilde{b}_i\big)|>\sigma/4\Big\}&\lesssim& \frac{1}{\sigma^2}\Big\|\sum_i e^{-\lambda t_i}P_{2t_i}\tilde{b}_i\Big\|_2^2\\
&\lesssim& \frac{1}{\sigma^2}\Big\|\sum_i e^{-(a_1+\lambda) t_i} P_{2t_i}^0|b_i|\Big\|_2^2\\
&\leq&\frac{1}{\sigma^2}\Big\|\sum_i P_{2t_i}^0|b_i|\Big\|_2^2,
\end{eqnarray*}
where, in the last but one line, we used the semigroup domination property (see e.g. \cite{HSU1977})
$$|P_s\tilde{b}_i|\leq e^{-a_1 s/2}P_s^0|b_i|,\quad\mbox{for any }s>0,$$
and in the last line, we used the assumption that $a_1+\lambda>0$.
By duality,
\begin{eqnarray*}
\Big\|\sum_i P_{2t_i}^0|b_i|\Big\|_2&=& \sup_{\|f\|_2=1}\Big|\int_M\sum_i \big(P_{2t_i}^0|b_i|\big)f\,\d\mu\Big|\\
&=&\sup_{\|f\|_2=1}\Big|\sum_i \int_M|b_i| P_{2t_i}^0 f\,\d\mu\Big|\\
&\leq&\sup_{\|f\|_2=1}\sum_i \|b_i\|_1\Big(\sup_{B_i}|P_{2t_i}^0 f|\Big).
\end{eqnarray*}
We claim that, for every $f\in L^2(M,\mu)$,
\begin{eqnarray}\label{pf-main-2}
\sup_{B_i}|P_{2t_i}^0f|\lesssim\inf_{B_i}\mathcal{M}(f),
\end{eqnarray}
where $\mathcal{M}$ is the Hardy--Littlewood maximal operator defined as
$$\mathcal{M}(f)(x)=\sup_{r>0}\frac{1}{V(x,r)}\int_{B(x,r)} |f(y)|\,\d\mu(y).$$
Let $y\in B_i$. Set $G_1=2^{2/m}B_i$ and $G_j=2^{(j+1)/m}B_i\setminus 2^{j/m}B_i$ when $j=2,3,\cdots$. Applying Lemma \ref{UE}, we derive that, for any $z\in G_j$,
\begin{eqnarray*}
 p_{2t_i}^0(y,z)\lesssim \frac{1}{V(y,(2t_i)^{1/m})}e^{-c_1 2^{j-1}+c_2(2t_i)^{\kappa/m}}\lesssim \frac{1}{V(y,(2t_i)^{1/m})}e^{-c_1 2^{j-1}},
\end{eqnarray*}
since $t_i\in (0,1]$, where $c_1,c_2>0$ are constants from Lemma \ref{UE}. Hence, for every $f\in L^2(M,\mu)$, by (II.2),
\begin{eqnarray*}
|P_{2t_i}^0f(y)|&=&\bigg|\int_M p_{2t_i}^0(y,z)f(z)\,\d\mu(z)\bigg| \leq \sum_{j=1}^\infty\int_{G_j}p_{2t_i}^0(y,z)|f(z)|\,\d\mu(z)\\
&\lesssim&\sum_{j=1}^\infty \frac{\mu(2^{j+1}B_i)}{V(y,(2t_i)^{1/m})}\frac{e^{-c_12^{j-1}}}{\mu(2^{j+1}B_i)}\int_{2^{j+1}B_i}|f(z)|\,\d\mu(z)\\
&\lesssim& \sum_{j=1}^\infty  2^{(j/m+1)D}e^{2^{(j/m+1)\kappa}}e^{-c_12^{j-1}} \Big(\inf_{B_i}\mathcal{M}(f)\Big)\\
&\lesssim& \inf_{B_i}\mathcal{M}(f),
\end{eqnarray*}
which implies \eqref{pf-main-2}. Thus, if $\|f\|_2=1$, then
\begin{eqnarray*}
&&\sum_i \|b_i\|_1\Big(\sup_{B_i}|P_{2t_i}^0 f|\Big)\lesssim\sum_i \sigma \mu(B_i)\inf_{B_i}\mathcal{M}(f)\\
&\leq& \sigma \sum_i \int_{B_i} \mathcal{M}(f)\,\d\mu\leq \sigma \big[\mu\big(\cup_i B_i\big)\big]^{1/2}\|\mathcal{M}(f)\|_2\\
&\lesssim&\big(\sigma\|\alpha\|_1\big)^{1/2},
\end{eqnarray*}
where we used (b) and (c) above and the fact that $\mathcal{M}$ is bounded in $L^2(M,\mu)$. Thus,
\begin{eqnarray}\label{pf-main-1-4}
\mu\Big\{|R_\lambda\big(\sum_i  e^{-\lambda t_i}P_{2t_i}\tilde{b}_i\big)|>\sigma/4\Big\}\lesssim \frac{\|\alpha\|_1}{\sigma}.
\end{eqnarray}

It remains to estimate the second term on the right hand side of \eqref{pf-main-1}. Obviously,
\begin{eqnarray*}
&&\mu\Big\{|R_\lambda\big(\sum_i (I- e^{-\lambda t_i}P_{2t_i})\tilde{b}_i\big)|>\sigma/4\Big\}\cr
&\leq&\sum_i\mu(2B_i) + \mu\Big(\big(M\setminus \cup_i2B_i)\cap  \Big\{ |R_\lambda\big(\sum_i(I-e^{-\lambda t_i}P_{2t_i})\tilde{b}_i\big)|>\sigma/4\Big\} \Big)\cr
&\leq&\sum_i\mu(2B_i) + \frac{4}{\sigma} \sum_i \int_{M\setminus 2B_i}  |R_\lambda (I-e^{-\lambda t_i}P_{2t_i})\tilde{b}_i |\,\d\mu,
\end{eqnarray*}
where, by (II.2) and (c) above,
$$\sum_i\mu(2B_i)\lesssim \sum_i\mu(B_i)e^{r_i^\kappa}\lesssim\sigma^{-1}\|\alpha\|_1,$$
since $r_i\in(0,1]$. Hence, in view of (b) and (c),  it is sufficient to prove that, for each $i$,
\begin{eqnarray}\label{pf-main-3}
 \int_{M\setminus 2B_i} |R_\lambda (I-e^{-\lambda t_i}P_{2t_i})\tilde{b}_i |\,\d\mu\lesssim\|b_i\|_1.
\end{eqnarray}

Since
$$(-L+\lambda)^{-1/2}=\frac{1}{\sqrt{\pi}}\int_0^\infty e^{-\lambda s}P_{2s}\frac{\d s}{\sqrt{s}},$$
where the integral is understood in Bochner's sense, we obtain that
\begin{eqnarray*}
&&(-L+\lambda)^{-1/2}(I-e^{-\lambda t_i}P_{2t_i})\cr
&=&\frac{1}{\sqrt{\pi}}\int_0^\infty e^{-\lambda s}P_{2s}\frac{\d s}{\sqrt{s}}-
\frac{1}{\sqrt{\pi}}\int_0^\infty e^{-\lambda (s+t_i)}P_{2(s+t_i)}\frac{\d s}{\sqrt{s}}\\
&=&\frac{1}{\sqrt{\pi}}\int_0^\infty\Big(\frac{1}{\sqrt{s}}-\frac{\mathbf{1}_{\{s>t_i\}}}{\sqrt{s-t_i}}\Big)e^{-\lambda s}P_{2s}\,\d s.
\end{eqnarray*}
 Hence, applying Theorem \ref{deri-estimate}, we immediately deduce that
\begin{eqnarray*}
&&|R_\lambda (I-e^{-\lambda t_i}P_{2t_i})\tilde{b}_i | \cr
&\leq& \frac{1}{\sqrt{\pi}}\int_0^\infty\Big|\frac{1}{\sqrt{s}}-\frac{\mathbf{1}_{\{s>t_i\}}}{\sqrt{s-t_i}}\Big|e^{-\lambda s}|\D_\omega P_{2s}\tilde{b}_i|\,\d s\cr
&\leq& \frac{1}{\sqrt{\pi}}\int_0^\infty\Big|\frac{1}{\sqrt{s}}-\frac{\mathbf{1}_{\{s>t_i\}}}{\sqrt{s-t_i}}\Big|e^{-\lambda s}\big( C(s) e^{-a_1s}  \|P_s \tilde{b}_i\|_\infty P^0_s|P_s\tilde{b}_i|\big)^{1/2}\,\d s,
%&\leq&\int_0^\infty\Big|\frac{1}{\sqrt{s}}-\frac{\mathbf{1}_{\{s>t_i\}}}{\sqrt{s-t_i}}\Big|e^{-\lambda s}\big( C(s) e^{-2a_1s}  \| b_i\|_\infty P^0_{2s}|b_i|\big)^{1/2}\,\d s\cr
%&\leq&\int_0^\infty\Big|\frac{1}{\sqrt{s}}-\frac{\mathbf{1}_{\{s>t_i\}}}{\sqrt{s-t_i}}\Big| \big( C(s) \|b_i\|_\infty P^0_{2s}|b_i|\big)^{1/2}\,\d s,
\end{eqnarray*}
where $$C(s):=\frac{a[C(\omega)+C(\vartheta)\sqrt{s}/2]^2}{1-e^{-as}}$$
with $a=\max\{a_2-a_3,0\}$. By the semigroup domination property again,
$$P^0_s|P_s\tilde{b}_i|\leq e^{-a_1s/2}P_{2s}^0|b_i|,$$
and by Lemma \ref{UE}, we have that
\begin{eqnarray*}
|P_s\tilde{b}_i(x)|&\leq& e^{-a_1s/2}\int_{B_i} p_s^0(x,y)|b_i|(y)\,\d\mu(y)\cr
&\lesssim&e^{-a_1s/2}\|b_i\|_1\sup_{y\in B_i}\frac{e^{c_2s^{\kappa/m}}}{V(y,s^{1/m})}.
\end{eqnarray*}
Then
\begin{eqnarray}\label{pf-main-4}
&&|R_\lambda (I-e^{-\lambda t_i}P_{2t_i})\tilde{b}_i | \cr
&\lesssim& \int_0^\infty\Big|\frac{1}{\sqrt{s}}-\frac{\mathbf{1}_{\{s>t_i\}}}{\sqrt{s-t_i}}\Big|e^{-(\lambda +a_1)s}\Big( C(s)\|b_i\|_1 P^0_{2s}|b_i| \sup_{y\in B_i}\frac{e^{c_2s^{\kappa/m}}}{V(y,s^{1/m})}\Big)^{1/2}\,\d s.
\end{eqnarray}

Let $\eta=c_1/2$. By (b) above and the Cauchy--Scharwz inequality, we deduce that
\begin{eqnarray*}
&&\int_{M\setminus 2B_i}\big(P_{2s}^0|b_i|(x)\big)^{1/2}\,\d\mu(x)=\int_{M\setminus 2B_i}\Big(\int_{B_i}p_{2s}^0(x,y)|b_i(y)|\,\d\mu(y)\Big)^{1/2}\,\d\mu(x)\cr
&\leq&\int_{M\setminus 2B_i}\Big(\sup_{y\in B_i}e^{-\eta\big(\frac{d^m(x,y)}{2s}\big)^{\frac{1}{m-1}}}\Big)^{1/2}\Big(\int_{B_i}e^{\eta\big(\frac{d^m(x,y)}{2s}\big)^{\frac{1}{m-1}}}
p_{2s}^0(x,y)|b_i(y)|\,\d\mu(y)\Big)^{1/2}\,\d\mu(x)\cr
&\leq&\Big(\int_{M\setminus 2B_i}\int_{B_i}e^{\eta\big(\frac{d^m(x,y)}{2s}\big)^{\frac{1}{m-1}}}
p_{2s}^0(x,y)|b_i(y)|\,\d\mu(y)\,\d\mu(x)\Big)^{1/2}\cr
&&\times\Big(\sup_{y\in B_i}\int_{M\setminus 2B_i}e^{-\eta\big(\frac{d^m(x,y)}{2s}\big)^{\frac{1}{m-1}}}\,\d\mu(x)\Big)^{1/2}\cr
&=:& J_1\times J_2.
\end{eqnarray*}
Lemma \ref{lemma-1} implies that
$$J_2\lesssim e^{-\frac{\eta}{4}\big(\frac{(2r_i)^m}{s}\big)^{\frac{1}{m-1}}}\sup_{y\in B_i}\big[ V\big(y,(2s)^{1/m}\big)\big]^{1/2}.$$
Lemma \ref{lemma-1} and Lemma \ref{UE} imply that
\begin{eqnarray*}
J_1&\lesssim& \Big(\int_{M\setminus 2B_i} \frac{e^{c_2(2s)^{\kappa/m}}|b_i|(y)}{V(y,(2s)^{1/m})}  \int_{B_i} e^{-(c_1-\eta)\big(\frac{d^m(x,y)}{2s}\big)^{\frac{1}{m-1}}} \,\d\mu(x)\,\d\mu(y)       \Big)^{1/2}\cr
&\lesssim&\big(e^{c_2(2s)^{\kappa/m}}\|b_i\|_1\big)^{1/2}.
\end{eqnarray*}
Hence,
\begin{eqnarray}\label{pf-main-5}
&&\int_{M\setminus 2B_i}\big(P_{2s}^0|b_i|(x)\big)^{1/2}\,\d\mu(x)\cr
&\lesssim&\exp\Big[-\frac{c_1}{8}\Big(\frac{(2r_i)^m}{2s}\Big)^{\frac{1}{m-1}}+ \frac{c_2}{2}(2s)^{\frac{\kappa}{m}}\Big] \big[\|b_i\|_1 \sup_{y\in B_i} V\big(y,(2s)^{1/m}\big)\big]^{1/2}.
\end{eqnarray}

Combining \eqref{pf-main-3}, \eqref{pf-main-4} and \eqref{pf-main-5}, we arrive at
\begin{eqnarray*}
&&\int_{M\setminus 2B_i} |R_\lambda (I-e^{-\lambda t_i}P_{2t_i})\tilde{b}_i |\,\d\mu\cr
&\lesssim&
\|b_i\|_1\int_0^\infty \Big|\frac{1}{\sqrt{s}}-\frac{\mathbf{1}_{\{s>t_i\}}}{\sqrt{s-t_i}}\Big| h(s)
\Big(\sup_{x,y\in B_i}\frac{V(x,(2s)^{1/m})}{V(y,s^{1/m})}\Big)^{1/2},
\end{eqnarray*}
where
$$h(s)=\sqrt{C(s)}\exp\Big[2c_2 s^{\frac{\kappa}{m}} -\frac{c_1}{4}\Big(\frac{t_i}{s}\Big)^{\frac{1}{m-1}} - (a_1+\lambda)s\Big].$$
By (II.2), for any $x,y\in B_i=B(x_i,t_i^{1/m})$,
\begin{eqnarray*}
V\big(x,(2s)^{1/m}\big)&\leq& V\big(y,2t^{1/m}_i+(2s)^{1/m}\big)\\
&\lesssim&\Big(\frac{2t^{1/m}_i+(2s)^{1/m}}{s^{1/m}}\Big)^D V\big(y,s^{1/m}\big) \exp\Big[\Big(\frac{2t^{1/m}_i+(2s)^{1/m}}{s^{1/m}}\Big)^\kappa + s^{\kappa/m}\Big]\\
&\leq& C_{\varepsilon,D}V\big(y,s^{1/m}\big)\exp\Big[\varepsilon\Big(\frac{t_i}{s}\Big)^{\frac{1}{m-1}} + s^{\kappa/m}\Big],
\end{eqnarray*}
for any  $\varepsilon>0$, where $C_{\varepsilon,D}$ is a positive constant. Hence, for any  $\varepsilon>0$,
\begin{eqnarray*}
&&\int_{M\setminus 2B_i} |R_\lambda (I-e^{-\lambda t_i}P_{2t_i})\tilde{b}_i |\,\d\mu\cr
&\lesssim&
\|b_i\|_1\int_0^\infty \Big|\frac{1}{\sqrt{s}}-\frac{\mathbf{1}_{\{s>t_i\}}}{\sqrt{s-t_i}}\Big| k(s)\,\d s\cr
&=&\|b_i\|_1 \Big(\int_0^{t_i}\frac{k(s)}{\sqrt{s}}\,\d s + \int_{t_i}^\infty \Big|\frac{1}{\sqrt{s}}-\frac{1}{\sqrt{s-t_i}}\Big| k(s)\,\d s\Big),
\end{eqnarray*}
where
$$k(s):=\sqrt{C(s)}\exp\Big[-\frac{(c_1-2\varepsilon)}{4}\Big(\frac{t_i}{s}\Big)^{\frac{1}{m-1}} - (a_1+\lambda)s + (1+2c_2) s^{\frac{\kappa}{m}}\Big].$$
Let
$$K_1:=\int_0^{t_i}\frac{k(s)}{\sqrt{s}}\,\d s,\quad K_2:=\int_{t_i}^\infty \Big|\frac{1}{\sqrt{s}}-\frac{1}{\sqrt{s-t_i}}\Big| k(s)\,\d s.$$
For every $\varepsilon\in (0, c_1/2)$, since $a_1+\lambda>0$ and $0\leq\frac{\kappa}{m}<\frac{1}{m-1}\leq1$, it is straightforward to check that
$$K_1\lesssim \int_0^{t_i} \frac{1}{s}\exp\Big[-\frac{(c_1-2\varepsilon)}{4}\Big(\frac{t_i}{s}\Big)^{\frac{1}{m-1}}\Big]\,\d s<\infty,$$
and
$$K_2\lesssim \int_{t_i}^\infty \Big|\frac{1}{\sqrt{s}}-\frac{1}{\sqrt{s-t_i}}\Big| \frac{1+\sqrt{s}}{\sqrt{s}}\exp\Big[- (a_1+\lambda)s + (1+2c_2) s^{\frac{\kappa}{m}}\Big]\,\d s<\infty,$$
where to estimate $C(s)$ we used the elementary inequality that $\frac{a}{1-e^{-as}}\leq \frac{1}{s}$, $s>0$, since $a\geq0$.

Thus, \eqref{pf-main-3} is proved, and hence we get
\begin{eqnarray}\label{pf-main-1-5}
\mu\Big\{|R_\lambda\big(\sum_i (I- e^{-\lambda t_i}P_{2t_i})\tilde{b}_i\big)|>\sigma/4\Big\} \lesssim \sigma^{-1}\|\alpha\|_1.
\end{eqnarray}

Combining \eqref{pf-main-1-1}, \eqref{pf-main-1-2}, \eqref{pf-main-1}, \eqref{pf-main-1-4} and \eqref{pf-main-1-5}, we complete the proof of (1).

\medskip

Now let $\vartheta=0$. We need to prove $R_0=\D_\omega(-L)^{-1/2}$ is weak $(1,1)$ bounded. From the above argument, the only difference lies in the
estimation of $K_2$. If either $a_1>0$ or $a_1=\kappa=0$, then
\begin{eqnarray*}
K_2&\lesssim& \int_{t_i}^\infty \Big|\frac{1}{\sqrt{s}}-\frac{1}{\sqrt{s-t_i}}\Big| \frac{1+\sqrt{s}}{\sqrt{s}}\,\d s\\
&=& \int_0^\infty\Big|\frac{1}{\sqrt{u}}-  \frac{1}{\sqrt{u+1}}  \Big| \Big( \frac{1}{\sqrt{u+1}} +\sqrt{t_i}\Big)\,\d s
<\infty.
\end{eqnarray*}
Thus, (2) is proved.

Therefore, we complete the proof of Theorem \ref{main-thm}.
\end{proof}

Finally, we remark that some further extensions of our main results as in \cite{LiHQ2018} are possible. However, the idea is the same, so we leave these for interested readers.

\end{document}